\date{}
\renewcommand{\uppercasenonmath}[1]{}
\theoremstyle{plain}
\newtheorem{theorem}{Theorem}[section]
\newtheorem{proposition}[theorem]{Proposition}
\newtheorem{lemma}[theorem]{Lemma}
\newtheorem{corollary}[theorem]{Corollary}
\theoremstyle{definition}
\newtheorem{example}[theorem]{Example}
\newtheorem{definition}[theorem]{Definition}
\theoremstyle{definition}
\newcommand{\Q}{\mathcal{Q}}
\newcommand{\Pp}{\mathcal{P}}
\def\bc{\begin{center}}
\def\ec{\end{center}}
\def\Max{{\rm Max}}
\def\Ext{{\rm Ext}}
\def\Tor{{\rm Tor}}
\def\tor{{\rm tor}}
\def\fd{{\rm fd}}
\def\Hom{{\rm Hom}}
\def\GV{{\rm GV}}
\def\Max{{\rm Max}}
\def\fd{{\rm fd}}
\def\GV{{\rm GV}}
\def\tor{{\rm tor_{\rm GV}}}
\def\Hom{{\rm Hom}}
\def\Ext{{\rm Ext}}
\def\Tor{{\rm Tor}}
\def\fd{{\rm fd}}
\def\GV{{\rm GV}}
\def\tor{{\rm tor_{\rm GV}}}
\def\Hom{{\rm Hom}}
\def\Ext{{\rm Ext}}
\def\Tor{{\rm Tor}}
\def\Max{{\rm Max}}
\def\DW{{\rm DW}}
\def\PvMR{{\rm PvMR}}
\def\DQ{{\rm DQ}}
\def\WQ{{\rm WQ}}
\def\p{{\frak p}}
\def\m{{\frak m}}
\def\a{{\bf a}}
\def\A{{\bf A}}
\begin{document}
\begin{center}
{\large  \bf A homological characterization of $Q_0$-Pr\"{u}fer $v$-multiplication rings}

\vspace{0.5cm}   

Xiaolei Zhang\\
School of Mathematics and Statistics, Shandong University of Technology\\
Zibo 255049, China\\
E-mail: zxlrghj@163.com\\
\end{center}

\bigskip
\centerline { \bf  Abstract}
\bigskip
\leftskip10truemm \rightskip10truemm \noindent
Let $R$ be a commutative ring.  An $R$-module $M$ is called a  semi-regular $w$-flat module if $\Tor_1^R(R/I,M)$ is $\GV$-torsion for any finitely generated semi-regular ideal $I$. In this article, we show that the class of semi-regular $w$-flat modules is a covering class. Utilizing these notions, we give some homological characterizations of $\WQ$-rings and $Q_0$-\PvMR s.
\\
\vbox to 0.3cm{}\\
{\it Key Words:} semi-regular $w$-flat module,  $\WQ$-ring,   $Q_0$-\PvMR.\\
{\it 2010 Mathematics Subject Classification:} 13F05, 13C11.

\leftskip0truemm \rightskip0truemm
\bigskip

\section{Introduction}

Throughout this paper, we always assume $R$ is a commutative ring with identity  and $T(R)$ is the total quotient ring of $R$. Following from \cite{fk16}, an ideal $I$ of $R$ is said to be \emph{dense} if $(0:_RI):=\{r\in R|Ir=0\}=0$ and  be \emph{semi-regular} if it contains a finitely generated dense sub-ideal.  Denote by $\Q$ the set of all finitely generated semi-regular ideals of $R$. Following from \cite{wzcc20} that a ring $R$ is called a \emph{$\DQ$-ring} if $\Q=\{R\}$. If $R$ is an integral domain, the quotient field $K$ is a very important $R$-module to study integral domains. However, the total quotient ring $T(R)$ is not always convenient to study commutative rings $R$ with zero-divisors. For example, the polynomial ring $R[x]$ is not always integrally closed in $T(R[x])$ when $R$ is integrally closed in $T(R)$ (see \cite{BCM79}). It is well-known that a finitely generated ideal $I=\langle a_0,a_1,\cdots,a_n\rangle$ is semi-regular if and only if the polynomial $f(x)=a_0+a_1x+\cdots+a_nx^n$ is a regular element in $R[x]$ (see \cite[Exercise 6.5]{fk16} for example). So, to study the integrally closeness of $R[x]$, Lucas \cite{L89} introduced the ring of finite fractions of $R$:
$$Q_0(R):=\{\alpha\in T(R[x])\mid\ \mbox{there exists}\ I\in \Q\ \mbox{such that } I\alpha\subseteq R\},$$
and showed that a reduced ring $R$ is integrally closed in $Q_0(R)$ if and only if $R[x]$ is integrally closed in $T(R[x])$.
Note that for any commutative ring $R$, we have $R\subseteq T(R)\subseteq Q_0(R)$. Recently, the authors \cite{z21,zdxq21} gave several homological characterizations of  total quotients rings (i.e. $R=T(R)$) and $\DQ$-rings utilizing certain generalized flat modules. There is a natural question to  characterize commutative rings with $R=Q_0(R)$ (called $\WQ$-rings from the star operation point of view). Actually, we show that  $\WQ$-rings are exactly those rings whose modules are all semi-regular $w$-flat (see Theorem \ref{201}).

Pr\"{u}fer domains are well-known domains and have been studied by many algebraists.  In order to generalize  Pr\"{u}fer domains  to commutative rings with zero-divisors, Butts and Smith \cite{BS67}, in 1967, introduced the notion of \emph{Pr\"{u}fer rings} over which every finitely generated regular ideal is invertible. Later in 1985, Anderson et al. \cite{AA85} introduced the notion of \emph{strong Pr\"{u}fer rings} whose finitely generated semi-regular ideals are all $Q_0$-invertible.  Strong Pr\"{u}fer rings have many nice properties. For example, the small finitistic dimensions of strong Pr\"{u}fer rings are at most one (see \cite{wzq20}). To give a $w$-analogue of  Pr\"{u}fer rings, Huckaba and Papick \cite{HP80} and Matsuda \cite{M80} called a ring $R$ to be a \PvMR\ (short for \emph{Pr\"{u}fer $v$-multiplication ring}) provided that any finitely generated regular ideal is $t$-invertible. For generalizing strong Pr\"{u}fer rings, Lucas \cite{L05} said a ring $R$ to be a $Q_0$-\PvMR\ (short for \emph{$Q_0$-Pr\"{u}fer $v$-multiplication ring}) if any finitely generated semi-regular ideal is $t$-invertible, and then he considered the properties of polynomial rings $R[x]$ and Nagata rings  $R(x)$ and $w$-Nagata rings  $R\{x\}$ when $R$ is a $Q_0$-\PvMR. For studying $Q_0$-\PvMR s, Qiao and Wang \cite{QW16} introduced quasi-$Q_0$-\PvMR s and showed that a ring $R$ is a $Q_0$-\PvMR\ if and only if $R$ is a quasi-$Q_0$-\PvMR, and $R$ has Property $B$ in $Q_0(R)$, i.e., $(IQ_0(R))_w=Q_0(R)_w$ for any $I\in\Q$. Wang and Kim \cite{fk15} gave some module-theoretic properties of $Q_0$-\PvMR s. Actually, they showed that  $Q_0$-\PvMR s are $Q_0$-$w$-coherent and each finite type semi-regular ideal is $w$-projective. The authors  \cite{z21,zdxq21} also gave some homological characterizations of  strong Pr\"{u}fer rings and $\PvMR$s utilizing the generalized flat modules. In this paper, we obtain several module-theoretic and  homological  characterizations of $Q_0$-\PvMR s using $w$-projective modules, $w$-flat modules and  semi-regular $w$-flat modules (see Theorem \ref{203}).

As our work involves the $w$-operations,  we give some reviews. A finitely generated ideal   $J$  of $R$ is called a \emph{Glaz-Vasconcelos ideal} (\emph{$\GV$-ideal} for short) if the natural homomorphism $R\rightarrow \Hom_R(J,R)$ is an isomorphism, and the set of all $\GV$-ideals is denoted by $\GV(R)$. Let $M$ be an $R$-module. Define
\begin{center}
{\rm $\tor(M):=\{x\in M|Jx=0$, for some $J\in \GV(R) \}.$}
\end{center}
An $R$-module $M$ is called \emph{$\GV$-torsion} (resp., \emph{$\GV$-torsion-free}) if $\tor(M)=M$ (resp., $\tor(M)=0$). A $\GV$-torsion-free module $M$ is called a \emph{$w$-module} if $\Ext_R^1(R/J,M)=0$ for any $J\in \GV(R)$, and the \emph{$w$-envelope} of $M$ is given by
\begin{center}
{\rm $M_w:=\{x\in E(M)|Jx\subseteq M$, for some $J\in \GV(R) \},$}
\end{center}
where $E(M)$ is the injective envelope of $M$. A fractional ideal $I$ is said to be $w$-invertible if $(II^{-1})_w=R$. A \emph{$\DW$ ring} $R$ is a ring over which every module is a $w$-module, equivalently the only $\GV$-ideal of $R$  is $R$. A \emph{maximal $w$-ideal} is an ideal of $R$ which is maximal among all $w$-submodules of $R$. The set of all maximal $w$-ideals is denoted by $w$-$\Max(R)$, and any maximal $w$-ideal is prime.

An $R$-homomorphism $f:M\rightarrow N$ is said to be a \emph{$w$-monomorphism} (resp., \emph{$w$-epimorphism}, \emph{$w$-isomorphism}) if for any $ \m \in w$-$\Max(R)$, $f_{\m}:M_{\m}\rightarrow N_{\m}$ is a monomorphism (resp., an epimorphism, an isomorphism).  A sequence $A\rightarrow B\rightarrow C$ is said to be \emph{$w$-exact} if for any $ \m\in w$-$\Max(R)$, $A_{\m}\rightarrow B_{\m}\rightarrow C_{\m}$ is exact. A class $\mathcal{C}$ of $R$-modules is said to be closed under $w$-isomorphisms provided that for any $w$-isomorphism $f:M\rightarrow N$, if one of the modules $M$ and $N$ is in $\mathcal{C}$, so is the other. An $R$-module $M$ is said to be of \emph{finite type} if there exist a finitely generated free module $F$ and a $w$-epimorphism $g: F\rightarrow M$. Following from \cite{fk15}, an $R$-module $M$ is said to be \emph{$w$-flat} if for any $w$-monomorphism $f: A\rightarrow B$, the induced sequence $f\otimes_R 1:A\otimes_R M\rightarrow B\otimes_R M$ is also a $w$-monomorphism. The classes of finite type modules and $w$-flat modules are all closed under $w$-isomorphisms, see \cite[Corollary 6.7.4]{fk16}.

\section{Semi-regular $w$-flat modules}

Recall from \cite{zdxq21}, an $R$-module $M$ is said to be a semi-regular flat module if, for any finitely generated semi-regular ideal $I$ (i.e. $I\in\Q$), we have $\Tor_1^R(R/I,M)=0$.  Obviously, every flat module is semi-regular flat. We denote by $\mathcal{F}_{sr}$ the class of all semi-regular flat modules.  Then the class $\mathcal{F}_{sr}$ of all semi-regular flat modules is closed under direct limits, pure submodules and pure quotients \cite[Lemma 2.4]{zdxq21}. Hence $\mathcal{F}_{sr}$  is a covering class (see \cite[Theorem 2.6]{zdxq21}). Now, we give a $w$-analogue of semi-regular flat modules.

\begin{definition}\label{sr-flat }
An $R$-module $M$ is said to be a \emph{semi-regular $w$-flat module} if $\Tor_1^R(R/I,M)$ is $\GV$-torsion for any $I\in\Q$. The class of all semi-regular $w$-flat modules is denoted by $w$-$\mathcal{F}_{sr}$.
\end{definition}

Obviously, semi-regular flat modules and  $w$-flat modules are all semi-regular $w$-flat.
Following from \cite{zdxq21} that an $R$-module $M$ is said to be a semi-regular coflat module if for any  $I\in\Q$, we have $\Ext^1_R(R/I,M)=0$.

\begin{lemma}\label{101}
 Let $M$ be an $R$-module. Then the following statements are equivalent:
\begin{enumerate}
 \item $M$ is a semi-regular $w$-flat module;
 \item for any  $I\in\Q$, the natural homomorphism $I\otimes M\rightarrow R\otimes M$ is a $w$-monomorphism;
 \item for any  $I\in\Q$, the natural homomorphism $\sigma_I: I\otimes M\rightarrow IM$ is a $w$-isomorphism;
 \item for any injective $w$-module $E$, $\Hom_R(M,E)$ is a semi-regular coflat module;
\end{enumerate}
\end{lemma}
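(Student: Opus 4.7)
The plan is to establish the cycle $(1) \Leftrightarrow (2) \Leftrightarrow (3)$ from the defining long exact sequence of $\Tor$, and then treat $(1) \Leftrightarrow (4)$ separately via the standard $\Hom$-$\Tor$ duality for injective modules. First I would apply $-\otimes_R M$ to the short exact sequence $0 \to I \to R \to R/I \to 0$ for each $I\in\Q$, yielding
$$0\to \Tor_1^R(R/I,M)\to I\otimes_R M \to R\otimes_R M \to R/I\otimes_R M \to 0.$$
Since localization at any $\fkm \in w\text{-}\Max(R)$ is exact and commutes with $\Tor$, the natural map $I\otimes_R M\to R\otimes_R M$ is a $w$-monomorphism precisely when $\Tor_1^R(R/I,M)_{\fkm}=0$ for every such $\fkm$, which is the $\GV$-torsion condition. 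This gives $(1)\Leftrightarrow (2)$. For $(2)\Leftrightarrow (3)$, I observe that $\sigma_I$ factors as $I\otimes_R M\to R\otimes_R M \cong M$ with image $IM$, and $\sigma_I$ is always an epimorphism; since $\ker \sigma_I = \ker(I\otimes_R M \to R\otimes_R M)$, $\sigma_I$ is a $w$-isomorphism exactly when it is a $w$-monomorphism.

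For $(1)\Leftrightarrow (4)$, the key is the natural isomorphism
$$\Hom_R(\Tor_1^R(R/I,M),E)\cong \Ext^1_R(R/I,\Hom_R(M,E)),$$
valid for any injective $R$-module $E$ (obtained by applying the exact functor $\Hom_R(-,E)$ to a projective resolution of $R/I$ tensored with $M$). I would then invoke the standard characterization: an $R$-module $N$ is $\GV$-torsion if and only if $\Hom_R(N,E)=0$ for every injective $w$-module $E$. The forward direction is routine, since every $w$-module is $\GV$-torsion-free and so annihilates $\GV$-torsion elements. For the converse, the injective envelope $E(N/\tor(N))$ is an injective $w$-module into which $N/\tor(N)$ embeds, and the composite $N\to N/\tor(N)\hookrightarrow E(N/\tor(N))$ is nonzero unless $N=\tor(N)$. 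Chaining these: $(1)$ says each $\Tor_1^R(R/I,M)$ is $\GV$-torsion, equivalently $\Hom_R(\Tor_1^R(R/I,M),E)=0$ for every $I\in\Q$ and every injective $w$-module $E$, equivalently $\Ext^1_R(R/I,\Hom_R(M,E))=0$ for every such $I$ and $E$, which is precisely $(4)$.

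The main obstacle is the $\GV$-torsion characterization via injective $w$-modules; everything else is a direct application of the definitions and the $\Tor$/$\Ext$ long exact sequences. This lemma is standard in the theory of $w$-modules and I expect it either appears explicitly in \cite{fk16} or can be cited from there, so the heart of the argument reduces to assembling the two dualities (tensor-Tor for $(1)\Leftrightarrow(2)\Leftrightarrow(3)$, and Hom-Tor plus the $\GV$-torsion lemma for $(1)\Leftrightarrow(4)$).
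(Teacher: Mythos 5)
Your proposal is correct and follows essentially the same route as the paper: the long exact sequence of $\Tor$ applied to $0\to I\to R\to R/I\to 0$ for $(1)\Leftrightarrow(2)\Leftrightarrow(3)$, and the duality $\Hom_R(\Tor_1^R(R/I,M),E)\cong\Ext^1_R(R/I,\Hom_R(M,E))$ for $(1)\Leftrightarrow(4)$. The only difference is that where the paper cites \cite[Corollary 3.11]{zw21-1} for the fact that a module $N$ is $\GV$-torsion if and only if $\Hom_R(N,E)=0$ for every injective $w$-module $E$, you prove it directly via the embedding $N/\tor(N)\hookrightarrow E(N/\tor(N))$, and your argument there is sound since the injective envelope of a $\GV$-torsion-free module is a $\GV$-torsion-free injective module and hence an injective $w$-module.
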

\begin{proof} $(1)\Leftrightarrow (2)$: Let $I$ be a finitely generated semi-regular ideal. Then we have a long exact sequence:  $$0\rightarrow \Tor_1^R(R/I,M)\rightarrow I\otimes_R M\rightarrow R\otimes_R M\rightarrow R/I\otimes_R M\rightarrow 0.$$ Consequently, $\Tor_1^R(R/I,M)$ is $\GV$-torsion if and only if
$I\otimes_R M\rightarrow R\rightarrow R\otimes_R M$ is a $w$-monomorphism.

$(2)\Rightarrow (3)$: Let $I$ be a finitely generated semi-regular ideal. Then we have the following commutative diagram:
$$\xymatrix{
 0 \ar[r]^{} &I\otimes_R M \ar[d]_{\sigma_I}\ar[r]^{} & R\otimes_RM \ar[d]_{\cong}\\
 0 \ar[r]^{} & IM \ar[r]^{} &M.
}$$
Then $\sigma_I$ is a $w$-monomorphism. Since the multiplicative map $\sigma_I$ is an epimorphism, $\sigma_I$ is a $w$-isomorphism.

$(3)\Rightarrow (1)$: Let $I$ be a finitely generated semi-regular ideal. Then we have a long exact sequence:
$$\xymatrix{
 0\ar[r]^{} & \Tor_1^R(R/I,M) \ar[r]^{} &I M \ar[r]^{f} & M.
}$$
Since $f$ is a  natural embedding map, we have $\Tor_1^R(R/I,M)$ is $\GV$-torsion.

$(1)\Rightarrow (4)$: Let $I$ be a finitely generated semi-regular ideal and $E$  an injective $w$-module. Then $\Ext_R^1(R/I,\Hom_R(M,E))\cong\Hom_R(\Tor_1^R(R/I,M),E)$. Since $M$ is a semi-regular $w$-flat module, then $\Tor_1^R(R/I,M)$ is $\GV$-torsion. Since $E$ is a $w$-module, we have $\Hom_R(\Tor_1^R(R/I,M),E)=0$. Thus $\Ext_R^1(R/I,\Hom_R(M,E))=0$, So $\Hom_R(M,E)$ is a semi-regular coflat module.

$(4)\Rightarrow (1)$: Let $I$ be a finitely generated semi-regular ideal and $E$  an injective $w$-module. Since $\Hom_R(M,E)$ is a semi-regular coflat module and $$\Ext_R^1(R/I,\Hom_R(M,E))\cong\Hom_R(\Tor_1^R(R/I,M),E),$$ we have $\Hom_R(\Tor_1^R(R/I,M),E)=0$. By \cite[Corollary 3.11]{zw21-1}, $\Tor_1^R(R/I,M)$ is $\GV$-torsion. So $M$ is a semi-regular $w$-flat module.
\end{proof}

\begin{corollary}\label{w-closed}
Let $R$ be a ring. The class of semi-regular $w$-flat $R$-modules is closed under $w$-isomorphisms.
\end{corollary}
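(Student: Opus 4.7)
The plan is to reduce the property in question to a Tor-vanishing condition (up to $\GV$-torsion) in the style of Lemma \ref{101}(1), and then transport this condition across a $w$-isomorphism using the local-global description of $\GV$-torsion at maximal $w$-ideals. Concretely, membership in the class is equivalent to $\Tor_1^R(R/I,M)$ being $\GV$-torsion for every ideal $I$ in the prescribed family (the finitely generated regular, resp.\ semi-regular, ideals), so the task reduces to showing that this condition on $M$ is preserved under any $w$-isomorphism $f : M \to N$.

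The key auxiliary fact I will use is standard in the $w$-theory: an $R$-module $T$ is $\GV$-torsion if and only if $T_\m = 0$ for every $\m \in w$-$\Max(R)$. This is the very reason the notions of $w$-monomorphism, $w$-epimorphism and $w$-isomorphism admit the localization-at-maximal-$w$-ideals description recalled in the introduction. Granting this, I apply $\Tor_1^R(R/I, -)$ to $f$ to obtain $g := \Tor_1^R(R/I, f)$; since Tor commutes with localization, the map $g_\m$ coincides with $\Tor_1^{R_\m}((R/I)_\m, f_\m)$, which is an isomorphism because $f_\m$ is. Hence both $\Ker g$ and $\Coker g$ vanish at every maximal $w$-ideal, i.e.\ both are $\GV$-torsion.

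Assuming now that $M$ lies in the class, $\Tor_1^R(R/I,M)$ is $\GV$-torsion, so $\Im g$ (being a quotient of it) is $\GV$-torsion. The short exact sequence
\[ 0 \to \Im g \to \Tor_1^R(R/I,N) \to \Coker g \to 0 \]
exhibits $\Tor_1^R(R/I,N)$ as an extension of two $\GV$-torsion modules; since the class of $\GV$-torsion modules is closed under extensions (the product of two $\GV$-ideals is again a $\GV$-ideal), $\Tor_1^R(R/I,N)$ is $\GV$-torsion. Thus $N$ belongs to the class; the reverse direction is symmetric.

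I do not anticipate a real obstacle: the entire argument rests on the Tor-characterization from Lemma \ref{101}, the fact that Tor commutes with localization, and the standard local description of $\GV$-torsion — all ingredients already in use in the paper. The only subtlety worth being explicit about is the closure of $\GV$-torsion under extensions, which follows because a finitely generated $\GV$-ideal acting on a coset representative pulls it into the submodule, and a second $\GV$-ideal then kills the finitely many resulting elements simultaneously.
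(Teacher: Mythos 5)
Your proof is correct, but it takes a genuinely different route from the paper's. The paper does not localize at all: it factors the $w$-isomorphism $f:M\to N$ through a module $L$ via two exact sequences $0\to T_1\to M\to L\to 0$ and $0\to L\to N\to T_2\to 0$ with $T_1,T_2$ $\GV$-torsion, then runs the two induced long exact sequences in $\Tor^R_\ast(R/I,-)$ and invokes the fact (cited as \cite[Theorem 6.7.2]{fk16}) that $\Tor^R_i(R/I,T)$ is $\GV$-torsion whenever $T$ is, so that $\Tor^R_1(R/I,M)$ is $\GV$-torsion iff $\Tor^R_1(R/I,L)$ is iff $\Tor^R_1(R/I,N)$ is. You instead use the local characterization of $\GV$-torsion ($T$ is $\GV$-torsion iff $T_\m=0$ for all $\m\in w$-$\Max(R)$) together with the compatibility of $\Tor$ with localization; this is legitimate, matches the paper's own localized definition of $w$-isomorphism, and could even be shortened further, since $\Tor_1^R(R/I,M)_\m\cong\Tor_1^R(R/I,N)_\m$ for every $\m$ already gives the equivalence without passing through $\Ker g$, $\Coker g$ and the extension-closure of $\GV$-torsion (which you nonetheless justify correctly). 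The trade-off: your argument is more elementary and self-contained modulo the local description of $\GV$-torsion, while the paper's is purely global and leans on the structural decomposition of $w$-isomorphisms and the cited Tor-preservation theorem; also note that your "reverse direction is symmetric" should really be read as the direct dual argument (extension of $\Ker g$ by $\Im g$), since a $w$-isomorphism need not admit an inverse homomorphism.
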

\begin{proof} Let $f:M\rightarrow N$ be a $w$-isomorphism and $I$  a finitely generated semi-regular  ideal.  There exist two exact sequences $0\rightarrow T_1\rightarrow M\rightarrow L\rightarrow 0$ and $0\rightarrow L\rightarrow N\rightarrow T_2\rightarrow 0$ with $T_1$ and $T_2$ $\GV$-torsion.
Consider the induced two long exact sequences $\Tor^R_1(R/I,T_1)\rightarrow \Tor^R_1 (R/I,M)\rightarrow \Tor^R_1 (R/I,L)\rightarrow R/I\otimes T_1$ and $\Tor^R_2 (R/I,T_2)\rightarrow \Tor^R_1(R/I,L)\rightarrow \Tor^R_1(R/I,N)\rightarrow \Tor^R_1(R/I,T_2)$. By \cite[Theorem 6.7.2]{fk16},   $M$ is semi-regular $w$-flat if and only if $N$ is semi-regular $w$-flat.
\end{proof}

\begin{proposition}\label{102}
Let $R$ be a ring. Then $R$ is a $\DW$-ring if and only if any semi-regular $w$-flat module is semi-regular flat.
\end{proposition}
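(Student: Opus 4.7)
The plan is to prove the two implications separately, handling the converse by contrapositive.

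For the forward direction, suppose $R$ is a $\DW$-ring. Then $\GV(R)=\{R\}$, so the only $\GV$-torsion module is the zero module. Consequently the defining condition of semi-regular $w$-flat (that $\Tor_1^R(R/I,M)$ be $\GV$-torsion for every $I\in\Q$) forces $\Tor_1^R(R/I,M)=0$, which means exactly that $M$ is semi-regular flat.

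For the converse, the plan is to assume $R$ is not $\DW$ and exhibit a semi-regular $w$-flat module that is not semi-regular flat. Pick a proper $\GV$-ideal $J_0\subsetneq R$ and set $M:=R/J_0$. To check that $M$ is semi-regular $w$-flat I would use that any $\GV$-ideal $J$ satisfies $J\not\subseteq\m$ for every $\m\in w$-$\Max(R)$ (otherwise the $w$-ideal property of $\m$ would give $1\in\m$). Consequently $M_{\m}=0$ at every maximal $w$-ideal, since $J_0$ annihilates $M$ but meets $R\setminus\m$. Localizing then yields $\Tor_1^R(R/I,M)_{\m}\cong\Tor_1^{R_{\m}}(R_{\m}/IR_{\m},M_{\m})=0$ for every $\m$, so this $\Tor$ is $\GV$-torsion and $M$ is indeed semi-regular $w$-flat.

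To see that $M$ fails to be semi-regular flat, I would first note that $J_0\in\Q$: injectivity of the natural map $R\to\Hom_R(J_0,R)$ (built into the definition of $\GV(R)$) forces $(0:_RJ_0)=0$, so $J_0$ is a finitely generated dense ideal. A direct computation from $0\to J_0\to R\to R/J_0\to 0$ gives $\Tor_1^R(R/J_0,R/J_0)\cong J_0/J_0^2$. If $M$ were semi-regular flat, this would vanish, so $J_0=J_0^2$. Nakayama's lemma applied to the finitely generated ideal $J_0$ would then produce $x\in J_0$ with $(1-x)J_0=0$, and density of $J_0$ forces $1-x=0$, i.e.\ $1\in J_0$, contradicting $J_0\neq R$.

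The main subtlety will be the last step: extracting from a hypothetical proper $\GV$-ideal the combination ``finitely generated plus dense'' and converting it via Nakayama into $J_0=R$. Everything else is definition-chasing together with the standard localization characterization of $\GV$-torsion.
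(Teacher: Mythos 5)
Your proof is correct and follows essentially the same route as the paper: both reduce the converse to showing that a $\GV$-ideal $J$ with $R/J$ semi-regular flat must equal $R$, via the computation $\Tor_1^R(R/J,R/J)\cong J/J^2=0$ together with $J$ being finitely generated and dense. The only (harmless) difference is the endgame: the paper passes from ``finitely generated idempotent'' to projectivity and then uses $J=J_w=R$, whereas you apply the determinant-trick form of Nakayama to get $(1-x)J=0$ and invoke density directly, which is if anything more self-contained.
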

\begin{proof}
Obviously, if $R$ is a $\DW$-ring, then  every  semi-regular $w$-flat module is semi-regular flat.  On the other hand, let $J$ be a $\GV$-ideal of $R$, then $R/J$  is $\GV$-torsion and hence a semi-regular $w$-flat module by Corollary \ref{w-closed}. So $R/J$ is a semi-regular flat module. Note the $\GV$-ideal  $J$ is finitely generated and semi-regular, so $\Tor_1^R(R/J,R/J)\cong J/J^2=0$ by \cite[Exercise 3.20]{fk16}. It follows that $J$ is a finitely generated idempotent ideal of $R$, and thus  $J$ is projective  by \cite[Proposition 1.10]{FS01}.  Hence $J=J_w=R$. Consequently,  $R$ is a $\DW$-ring.
\end{proof}

We say a class  $ \mathcal{F}$ of $R$-modules  is  precovering  provided that  for any $R$-module $M$, there is a homomorphism $f: F\rightarrow M$ with  $F\in \mathcal{F}$ such that $\Hom_R(F',F)\rightarrow \Hom_R(F',M)$ is an epimorphism for any $F'\in \mathcal{F}$. If, moreover, any homomorphism $h$ such that $f=f\circ h$  is an isomorphism,  $\mathcal{F}$ is said to be covering. It is well-known that the class of flat modules is a covering class (see \cite[Theorem 3]{BBE01}). It was also proved in \cite[Theorem 3.5]{z19}  that  the class of $w$-flat modules is a covering class. For the class of semi-regular flat modules, we have the following similar result.
\begin{proposition}\label{102}
Let $R$ be a ring. Then the class $w$-$\mathcal{F}_{sr}$ of all semi-regular flat modules is closed under direct limits, pure submodules and pure quotients. Consequently, $w$-$\mathcal{F}_{sr}$ is a covering class.
\end{proposition}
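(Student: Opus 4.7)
The plan is to mirror the proof strategy of \cite[Lemma 2.4 and Theorem 2.6]{zdxq21} for the non-$w$ version, substituting the class of $\GV$-torsion modules for the zero module throughout. Since $\GV$-torsion modules form a hereditary torsion class (closed under submodules, quotients, extensions, and direct limits), every step of the earlier argument should transfer with essentially no change.

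For the direct limit closure, I would fix $I\in\Q$ and a direct system $\{M_\lambda\}_\lambda$ in $w$-$\mathcal{F}_{sr}$, then use that $\Tor_1^R(R/I,-)$ commutes with direct limits to get $\Tor_1^R(R/I,\varinjlim_\lambda M_\lambda)\cong\varinjlim_\lambda \Tor_1^R(R/I,M_\lambda)$. Each factor on the right is $\GV$-torsion by hypothesis, and the direct limit of $\GV$-torsion modules is $\GV$-torsion (any element lifts to some level where it is annihilated by a $\GV$-ideal, and that $\GV$-ideal continues to annihilate it in the colimit).

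For closure under pure submodules and pure quotients, consider a pure exact sequence $0\to A\to B\to C\to 0$ with $B\in w$-$\mathcal{F}_{sr}$. The key technical input is that, since $R/I$ is finitely presented for $I\in\Q$, such a pure exact sequence induces a short exact sequence
$$0\to \Tor_1^R(R/I,A)\to \Tor_1^R(R/I,B)\to \Tor_1^R(R/I,C)\to 0.$$
This is most cleanly obtained via the Lazard--Warfield description of purity as a filtered colimit of split short exact sequences: one applies $\Tor_*^R(R/I,-)$ levelwise (getting split short exact sequences of $\Tor$'s) and then passes to the colimit. Once this sequence is in hand, the $\GV$-torsion property of $\Tor_1^R(R/I,B)$ forces the same for its submodule $\Tor_1^R(R/I,A)$ and its quotient $\Tor_1^R(R/I,C)$, yielding both closure statements simultaneously.

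For the covering conclusion, I would invoke the Bican--El Bashir--Enochs-type machinery as used in \cite[Theorem 2.6]{zdxq21}: closure under pure submodules together with an appropriate cardinality bound on modules makes $w$-$\mathcal{F}_{sr}$ a Kaplansky class, and a Kaplansky class closed under direct limits is always a covering class. The main potential obstacle is the short exactness of $\Tor_1^R(R/I,-)$ on pure sequences; the Lazard--Warfield approach sidesteps any delicate long-exact-sequence bookkeeping, so provided that input is handled cleanly, the remaining arguments are formal consequences of the analogous results in \cite{zdxq21}.
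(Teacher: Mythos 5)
Your proposal is correct, and two of its three parts (direct limits and the deduction of the covering property) coincide with the paper's argument: the paper likewise commutes $\Tor_1^R(R/I,-)$ past the colimit and then cites the Holm--J{\o}rgensen theorem that a class closed under pure quotients and direct limits is covering, which is the same Bican--El~Bashir--Enochs/Kaplansky-class machinery you invoke. Where you genuinely diverge is the purity step. The paper does not touch $\Tor_1$ directly there: it tensors the pure exact sequence $0\to M\to N\to L\to 0$ with $I$, $R$ and $R/I$ to get a $3\times 3$ commutative diagram with exact rows, notes that the middle column $N\otimes_R I\to N\otimes_R R$ is a $w$-monomorphism by the characterization of semi-regular $w$-flatness in Lemma~\ref{101}(2), and applies the generalized Five Lemma to conclude that the outer columns are $w$-monomorphisms as well. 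You instead produce the short exact sequence $0\to \Tor_1^R(R/I,M)\to \Tor_1^R(R/I,N)\to \Tor_1^R(R/I,L)\to 0$ via the Lazard--Warfield presentation of a pure exact sequence as a filtered colimit of split ones, and then use that $\GV$-torsion modules are closed under submodules and quotients. Both are sound; your route has the advantage of working directly with the defining condition (so no appeal to Lemma~\ref{101} or to a $w$-version of the Five Lemma is needed) and of making transparent exactly which closure properties of the $\GV$-torsion class are used, while the paper's diagram chase is more elementary in that it avoids the Lazard--Warfield theorem. One small remark: the finite presentation of $R/I$ that you flag is not actually needed for the Lazard--Warfield argument, since $\Tor$ commutes with filtered colimits in either variable regardless; it would only be needed if you wanted to deduce the surjectivity on $\Tor_1$ directly from the definition of purity.
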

\begin{proof}
For the  direct limits, suppose $\{M_i\}_{i\in\Gamma}$ is a direct system consisting of semi-regular $w$-flat modules. Then, for any finitely generated semi-regular ideal $I$, we have $\Tor_1^R(R/I,\lim\limits_{\longrightarrow } M_i)=\lim\limits_{\longrightarrow}\Tor_1^R(R/I,M_i)$ is $\GV$-torsion. So $\lim\limits_{\longrightarrow }M_i$ is a semi-regular $w$-flat module.

For pure submodules and pure quotients, let $I$ be a finitely generated semi-regular ideal. Suppose $0\rightarrow M\rightarrow N\rightarrow L\rightarrow 0$ is a pure exact sequence. We have the following commutative diagram with rows exact :
$$\xymatrix{
 0 \ar[r]^{} & M\otimes_R I \ar[d]_{f}\ar[r]^{} & N\otimes_R I \ar@{>->}[d]_{}\ar[r]^{} & L\otimes_R I \ar[d]_{g}\ar[r]^{} & 0\\
 0 \ar[r]^{} & M\otimes_R R \ar@{->>}[d]_{}\ar[r]^{} & N\otimes_R R \ar@{->>}[d]_{}\ar[r]^{} & L\otimes_R R \ar@{->>}[d]_{}\ar[r]^{} & 0\\
 0 \ar[r]^{} & M\otimes_R R/I \ar[r]^{} & N\otimes_R R/I \ar[r]^{} & L\otimes_R R/I \ar[r]^{} & 0\\}$$
By the generalized Five Lemma (see \cite[Lemma 6.3.6]{fk16}), the natural homomorphism $f: M\otimes_R I \rightarrow M\otimes_R R$ and $g: L\otimes_R I \rightarrow L\otimes_R R$ are all $w$-monomorphisms. Consequently, $M$ and $L$ are all semi-regular $w$-flat. Consequently, $w$-$\mathcal{F}_{sr}$ is a covering class  by \cite[Theorem 3.4]{HJ08}.
\end{proof}

\section{Rings characterized by Semi-regular $w$-flat modules}

Recently, the authors in \cite{ZDC20} introduced the notion of $q$-operation on a commutative ring $R$. We give some reviews here. An $R$-module $M$ is said to be $\Q$-torison-free if $Im=0$ with $I\in\Q$ and $m\in M$ can deduce $m=0$. Let $M$ be a $\Q$-torison-free $R$-module. The Lucas envelope
\begin{center}
$M_q=\{x\in E(M)\mid\ \mbox{there exists }\ I\in\Q\ \mbox{such that } Ix\subseteq M\}$
\end{center}
where $E(M)$ is the injective envelope of $M$. An $\Q$-torison-free $R$-module $M$ is said to be a Lucas module provided that $M_q=M$. By \cite[Proposition 2.2]{fkxs20}, a ring is a $\DQ$-ring if and only if every $R$-module is Lucas module. Since any $\GV$-ideal is finitely generated semi-regular, we have Lucas modules are all $w$-modules. However, $R$ itself is not always a Lucas module.  It was proved in \cite[Proposition 3.8]{wzcc20} that a ring $R$ is a Lucas module if and only if the $q$- and $w$-operations on $R$ coincide, if and only if every finitely generated semi-regular ideal is a $\GV$-ideal, if and only if $Q_0(R)=R$. For convenience, we say a ring $R$ is a $\WQ$-ring if every finitely generated semi-regular ideal is a $\GV$-ideal. Obviously, a ring $R$ is a $\DQ$-ring if and only  if it is both a $\DW$-ring and a $\WQ$-ring.

\begin{lemma}\label{fangmi}
Let $I=\langle a_1, a_2,\cdots,a_n\rangle$ be a finitely generated ideal of $R$. Suppose $m$ is  a positive integer and $K=\langle a^m_1, a^m_2,\cdots,a^m_n\rangle$. Then $I^{mn}\subseteq K$.
\end{lemma}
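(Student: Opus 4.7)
The plan is to reduce the statement to a purely combinatorial pigeonhole argument on the monomial generators of $I^{mn}$. Since $I=\langle a_1,\ldots,a_n\rangle$, the ideal $I^{mn}$ is generated (as an $R$-module) by products of the form $a_{i_1}a_{i_2}\cdots a_{i_{mn}}$ where each $i_j\in\{1,2,\ldots,n\}$. It suffices to show that every such generator already lies in $K=\langle a_1^m,\ldots,a_n^m\rangle$.

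Fix a generator $P = a_{i_1}a_{i_2}\cdots a_{i_{mn}}$ and, for each $k\in\{1,\ldots,n\}$, let $e_k$ denote the number of indices $j$ with $i_j=k$. Then $e_1+e_2+\cdots+e_n=mn$. By the pigeonhole principle, at least one of the $e_k$ must satisfy $e_k\geq m$ (otherwise the sum would be at most $n(m-1)<mn$). Pick such an index $k$; then $P$ contains the factor $a_k^{e_k}$, and we may factor $P=a_k^m\cdot\bigl(a_k^{e_k-m}\cdot\prod_{j:\,i_j\neq k}a_{i_j}\bigr)$, which clearly lies in $K$.

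Since every monomial generator of $I^{mn}$ belongs to $K$, we conclude $I^{mn}\subseteq K$, as required. There is no real obstacle here; the only thing to watch is the pigeonhole count ($n(m-1) = mn - n < mn$), which indeed forces at least one $e_k\geq m$, and the routine verification that a product of generators of $I$ is an $R$-linear combination of monomials in the $a_i$'s.
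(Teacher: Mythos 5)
Your proposal is correct and follows exactly the paper's argument: express $I^{mn}$ via its monomial generators $\prod a_i^{k_i}$ with $\sum k_i = mn$, apply the pigeonhole principle to find some exponent $k_i \geq m$, and conclude each generator lies in $K$. You merely spell out the pigeonhole count and the factorization more explicitly than the paper does.
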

\begin{proof}
Note that $I^{mn}$ is generated by $\{\prod\limits_{i=1}^na_i^{k_i}\mid  \sum\limits_{i=1}^nk_i=mn\}$. By the pigeonhole principle, there exits some $k_i$ such that $k_i\geq m$. So each $\prod\limits_{i=1}^na_i^{k_i}\in K$, and thus $I^{mn}\subseteq K$.
\end{proof}

\begin{theorem}\label{201}
Let $R$ be a ring. Then the following statements are equivalent:
\begin{enumerate}
 \item $R$ is a $\WQ$-ring;
 \item every $R$-module is semi-regular $w$-flat;
 \item for every finitely generated semi-regular ideal $I$, $R/I$ is a $w$-flat module;
 \item  $I\subseteq (I^2)_w$ for any finitely generated semi-regular ideal $I$ of $R$;
  \item every $w$-module is a Lucas module;
  \item $Q_0(R)=R$.
\end{enumerate}
\end{theorem}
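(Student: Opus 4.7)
The plan is to organize the seven conditions into three groups: two blocks of cheap equivalences, and a core cycle that carries the real content.

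The first block is bookkeeping. $(2)\Leftrightarrow(3)$ is immediate from Definition 3.3, since $sr$-$w$-w.gl.dim$(R)=0$ means precisely that every $R$-module has $sr$-$w$-fd equal to zero. The block $(1)\Leftrightarrow(6)\Leftrightarrow(7)$ is essentially a repackaging of Proposition 3.8 of \cite{wzcc20}, reviewed just above the theorem. Explicitly, $(6)\Rightarrow(1)$ is $(6)$ applied to $M=R$, and $(1)\Rightarrow(6)$ holds because, if every $I\in\Q$ is a $\GV$-ideal, then any $w$-module $M$ is automatically $\Q$-torsion-free and satisfies $M\subseteq M_q\subseteq M_w=M$, so $M$ is a Lucas module.

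The core is the cycle $(1)\Rightarrow(2)\Rightarrow(4)\Rightarrow(5)\Rightarrow(1)$. For $(1)\Rightarrow(2)$, if $I\in\Q$ is a $\GV$-ideal then $R/I$ is $\GV$-torsion, and Tor with a $\GV$-torsion module is $\GV$-torsion. For $(2)\Rightarrow(4)$, by the symmetry of Tor, $(2)$ reads ``$\Tor_1^R(N,R/I)$ is $\GV$-torsion for every $R$-module $N$ and every $I\in\Q$'', which is the Tor characterization of $R/I$ being $w$-flat. For $(4)\Rightarrow(5)$, tensor $0\to I\to R\to R/I\to 0$ with $R/I$; the induced map $I\otimes_R R/I\to R/I$ is zero since its image lies in $I/I=0$, so $\Tor_1^R(R/I,R/I)\cong I\otimes_R R/I\cong I/I^2$, and $w$-flatness of $R/I$ forces $I/I^2$ to be $\GV$-torsion, i.e.\ $I\subseteq(I^2)_w$.

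The main obstacle is $(5)\Rightarrow(1)$. The target is $I_w=R$, after which $I$ itself must be a $\GV$-ideal by the standard fact that any finitely generated ideal containing a $\GV$-subideal is again a $\GV$-ideal (a short Hom argument using that $R$ is $\GV$-torsion-free). The strategy is local: fix $\m\in w\text{-}\Max(R)$. Condition $(5)$ gives $I/I^2$ $\GV$-torsion, hence $I_\m=(I_\m)^2$. If $I_\m\neq R_\m$ then $I_\m\subseteq \m R_\m$, and Nakayama's lemma applied to the finitely generated $R_\m$-module $I_\m$ forces $I_\m=0$; writing $I=\langle a_1,\dots,a_n\rangle$, each $a_i$ would then be killed by some $s_i\in R\setminus\m$, and $s_1\cdots s_n$ would be a nonzero element of $(0:_R I)$, contradicting the density of $I$. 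Hence $I_\m=R_\m$ at every maximal $w$-ideal, so $R/I$ is $\GV$-torsion and $I_w=R$. This density-plus-Nakayama step is the decisive one; Lemma \ref{fangmi} could alternatively be used to iterate $(5)$ into $I\subseteq(\langle a_1^m,\dots,a_n^m\rangle)_w$ for all $m$ and argue along those lines, but the local argument is cleaner and avoids the combinatorial iteration.
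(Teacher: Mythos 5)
Your proposal is correct, and for six of the seven equivalences it follows the same route as the paper: the cycle $(1)\Rightarrow(2)\Rightarrow(4)\Rightarrow(5)\Rightarrow(1)$ together with the cheap blocks $(2)\Leftrightarrow(3)$ and $(1)\Leftrightarrow(6)\Leftrightarrow(7)$ is exactly the paper's decomposition, and your arguments for $(1)\Rightarrow(2)$, $(2)\Rightarrow(4)$ and $(4)\Rightarrow(5)$ match the paper's. The genuine divergence is in the decisive implication $(5)\Rightarrow(1)$. The paper argues globally: from $I\subseteq (I^2)_w$ it extracts a $\GV$-ideal $J=\langle j_1,\dots,j_m\rangle$ with $JI\subseteq I^2$, applies the determinant (Cayley--Hamilton) trick to the relations $j_ka_i=\sum_j r_{ij}a_j$ with $r_{ij}\in I$, uses density of $I$ to kill $\det(\A)$ and conclude $j_k^n\in I$, and then invokes Lemma \ref{fangmi} to get $J^{nn}\subseteq I$, so that $I$ is $\GV$ by \cite[Proposition 6.1.9]{fk16}. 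You instead localize at each $\m\in w$-$\Max(R)$: condition $(5)$ gives $I_\m=(I_\m)^2$, Nakayama forces $I_\m=0$ or $I_\m=R_\m$, and density of $I$ rules out $I_\m=0$ (a product of the annihilating elements $s_i\in R\setminus\m$ would be a nonzero element of $(0:_RI)$); hence $R/I$ is $\GV$-torsion, $I$ contains a $\GV$-subideal, and the same \cite[Proposition 6.1.9]{fk16} finishes. Both arguments are sound and both ultimately lean on density of $I$ plus that cited proposition; yours buys a shorter, more conceptual proof that avoids the matrix computation and makes Lemma \ref{fangmi} unnecessary for this theorem, at the cost of invoking the localization characterization of $\GV$-torsion modules (\cite[Theorem 6.2.15]{fk16}) and Nakayama, whereas the paper's version is self-contained at the level of elementary ideal manipulations.
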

\begin{proof}
$(1)\Rightarrow (2)$: Let $I$ be a finitely generated semi-regular ideal of $R$ and $M$ an $R$-module. Then $I$ is a $\GV$-ideal of $R$. So $\Tor_1^R(R/I,M)$ is $\GV$-torsion. Hence $M$ is semi-regular $w$-flat.

$(1)\Rightarrow (5)$: Trivial.

$(2)\Rightarrow (3)$: Let $I$ be a finitely generated semi-regular ideal of $R$ and $K$ a finitely generated ideal of $R$. Then $R/K$ is a semi-regular $w$-flat module. So $\Tor_1^R(R/K,R/I)$ is $\GV$-torsion. Hence $R/I$ is a $w$-flat module by \cite[Theorem 6.7.3]{fk16}.

$(3)\Rightarrow (4)$: Let $I$ be a finitely generated semi-regular ideal of $R$. Then  $\Tor_1^R(R/I,R/I)$ is $\GV$-torsion since $R/I$ is a $w$-flat module by $(4)$. That is, $I/I^2$ is $\GV$-torsion, and thus $I\subseteq (I)_w=(I^2)_w$.

$(4)\Rightarrow (1)$: Let  $I=\langle a_1,...,a_n\rangle$ is finitely generated semi-regular ideal. There exists a $\GV$-ideal $J$ such that $JI\subseteq I^2$.  We claim that $I$ is also a $\GV$-ideal. Indeed,                                                               suppose $J$ is generated by $\{j_1,\cdots ,j_m\}$. For each $k=1,\cdots,m$, we have $j_ka_i=\sum\limits_{j=1}^nr_{ij}a_j$ for some suitable $r_{ij}\in I$. The column vector $\a\in R^n$ whose $i$-th coordinate is $a_i$, and the matrix $\A=\|j_k\delta_{ij}-r_{ij}\|$, where $\delta_{ij}$ is the Kronecker symbol, satisfy $\A\a=0$. Hence $\det(\A)\a=0$. Since  $I$ is semi-regular, we have $\det(\A)=0$. So $j^n_k+j^{n-1}_kr_1+\cdots+r_n=0$ for some $r_i\in I$. Thus $j^n_k\in I$ for each $k=1,\cdots,m$. By Lemma \ref{fangmi}, we have $J^{nn}\subseteq \langle j^n_k\mid k=1,\cdots,m\rangle \subseteq I$. Since $J^{nn}$ is a $\GV$-ideal, the finitely generated semi-regular ideal $I$ is also a $\GV$-ideal (see \cite[Proposition 6.1.9]{fk16}).

$(5)\Rightarrow (1)$: Since $R$ is a $w$-module, then it is a Lucas module. So $R$ is a $\WQ$-ring by \cite[Proposition 3.8]{wzcc20}.

$(1)\Leftrightarrow (6)$: See  \cite[Proposition 3.8]{wzcc20}.
\end{proof}

It was proved in \cite[Theorem 3.1]{zdxq21} that a ring $R$ is a $\DQ$-ring (i.e. the only finitely generated semi-regular ideal of $R$ is $R$ itself)  if and only if every $R$-module is  semi-regular flat.
\begin{example}\cite[Example 12]{L93}
Let $D=L[X^2,X^3,Y]$, $\Pp=Spec(D)-\{\langle X^2,X^3,Y\rangle\}$, $B=\bigoplus\limits_{\p\in \Pp}K(R/\p)$  and $R=D(+)B$ where $L$ is a field and $K(R/\p)$ is the quotient field of $R/\p$. Since $Q_0(R)=R$, $R$ is a $\WQ$-ring by \cite[Proposition 3.8]{wzcc20}. Since every finitely generated $R$-ideal of the form $J(+)B$ with $\sqrt{J}=\langle X^2,X^3,Y\rangle$ is a $\GV$-ideal, $R$ is not a $\DW$-ring. Hence $R$ is not a $\DQ$-ring by \cite[Proposition 2.2]{fkxs20}.
\end{example}

Recall from  Lucas \cite{L05} that an ideal $I$ of $R$ is said to be $t$-invertible if there
is an $R$-submodule $J$ of $Q_0(R)$ such that $(IJ)_t = R$, and $R$ is called a $Q_0$-$\PvMR$  if every finitely generated semi-regular ideal of $R$ is $t$-invertible. From \cite[Proposition 4.17]{fk15}, a semi-regular ideal is $t$-invertible if and only if it is  $w$-invertible. So a ring $R$ is a $Q_0$-$\PvMR$ if and only if every finitely generated semi-regular ideal is $w$-invertible. Recall from \cite{fk15} that an $R$-module $M$ is said to be a \emph{$w$-projective module} if $\Ext^1_R((M/\Tor_{\GV}(M))_w, N)$ is a $\GV$-torsion module for any torsion-free $w$-module $N$. Recall from \cite{fk15} that an $R$-module $M$ is said to be semi-regular if there are a positive integer $n$ and a chain of submodules of $M$:
$$0\subseteq M_1\subseteq  M_2\subseteq \cdots \subseteq  M_{n-1}\subseteq  M_n=M$$
such that every factor module $M_i/M_{i-1}$ is $w$-isomorphic to a semi-regular ideal of $R$.

\begin{theorem}\label{203}
 Let $R$ be a ring. Then the following statements are equivalent:
\begin{enumerate}
 \item $R$ is a  $Q_0$-$\PvMR$;
 \item any submodule of a semi-regular $w$-flat $R$-module  is semi-regular $w$-flat;
 \item any submodule of a $w$-flat $R$-module  is semi-regular $w$-flat;
 \item any ideal of $R$ is semi-regular $w$-flat;
 \item any finitely generated $($resp., finite type$)$ ideal of $R$ is semi-regular $w$-flat;
 \item any finitely generated  $($resp., finite type$)$ semi-regular ideal  of $R$ is $w$-flat;
 \item any finitely generated  $($resp., finite type$)$ semi-regular ideal  of $R$ is $w$-projective;
  \item  any finitely generated  $($resp., finite type$)$ semi-regular $R$-module is $w$-projective.
\end{enumerate}
\end{theorem}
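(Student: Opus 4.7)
I plan to organise the nine conditions into clusters and connect them via a small circuit of non-trivial implications. The \emph{ideal-theoretic} cluster $(1)\Leftrightarrow (6)\Leftrightarrow (7)\Leftrightarrow (9)$ is handled through the $w$-analogue of the Hattori--Endo theorem: by the results in \cite{fk15}, a finite type semi-regular ideal is $w$-flat iff $w$-projective iff $w$-invertible, which directly yields $(1)\Leftrightarrow (6)\Leftrightarrow (7)$. For the extension to semi-regular modules, $(9)\Rightarrow (7)$ is immediate since a finite type semi-regular ideal is itself a finite type semi-regular module; for $(7)\Rightarrow (9)$, a finite type semi-regular module $M$ admits a finite filtration whose successive quotients are $w$-isomorphic to (finite type) semi-regular ideals, each $w$-projective by $(7)$, and since $w$-projectivity is closed under $w$-isomorphism and finite extensions (via the long exact $\Ext$-sequence attached to its defining vanishing condition), $M$ itself is $w$-projective.

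The \emph{global-dimension} cluster $(2)\Leftrightarrow (3)\Leftrightarrow (8)$ is organised through Corollary \ref{w-g-flat}, which reduces $(8)$ to the statement that $\Tor_2^R(M,R/I)$ is $\GV$-torsion for every $R$-module $M$ and every $I\in \Q$. Then $(8)\Rightarrow (3)$ follows by plugging $0\to N\to F\to F/N\to 0$ (with $F$ a $w$-flat module containing $N$) into the Tor long exact sequence: $\Tor_1^R(N,R/I)$ is trapped between $\Tor_2^R(F/N,R/I)$ and $\Tor_1^R(F,R/I)$, both $\GV$-torsion. The converse $(3)\Rightarrow (2)$ is immediate since $w$-flat modules are semi-regular $w$-flat, and $(2)\Rightarrow (8)$ follows from a free cover $F\to M\to 0$ whose kernel is semi-regular $w$-flat by hypothesis, giving $sr$-$w$-fd$_R(M)\leq 1$. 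The bridging implications $(3)\Rightarrow (4)\Rightarrow (5)$ are trivial. Finally, the implication $(1)\Rightarrow (8)$ is obtained by combining $(1)\Leftrightarrow (6)$ with the dimension shift $\Tor_2^R(M,R/I)\cong \Tor_1^R(M,I)$ arising from $0\to I\to R\to R/I\to 0$: since $I$ is $w$-flat under $(1)$, the right-hand side is $\GV$-torsion for every $M$, so $sr$-$w$-w.gl.dim$(R)\leq 1$.

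The hard part will be the implication $(5)\Rightarrow (6)$: assuming every finitely generated ideal of $R$ is semi-regular $w$-flat, show that every finite type semi-regular ideal $I$ is $w$-flat. Reducing to the finitely generated case via closure under $w$-isomorphism (Corollary \ref{w-closed} together with the corresponding $w$-flat closure from \cite[Corollary 6.7.4]{fk16}), by \cite[Theorem 6.7.3]{fk16} it suffices to verify that $\Tor_1^R(R/K,I)$ is $\GV$-torsion for every finitely generated ideal $K$. I would run dimension shifting on both short exact sequences $0\to K\to R\to R/K\to 0$ and $0\to I\to R\to R/I\to 0$; the flatness of $R$ kills the outer Tor terms and yields
\[
\Tor_1^R(R/K,I)\;\cong\; \Tor_2^R(R/I,R/K)\;\cong\; \Tor_1^R(K,R/I).
\]
Since $K$ is semi-regular $w$-flat by hypothesis $(5)$ and $I\in\Q$, the right-hand side is $\GV$-torsion, establishing $(5)\Rightarrow (6)$ and closing the circuit $(1)\Rightarrow (8)\Rightarrow (3)\Rightarrow (4)\Rightarrow (5)\Rightarrow (6)\Rightarrow (1)$, with the side equivalences $(2)\Leftrightarrow (3)\Leftrightarrow (8)$ and $(6)\Leftrightarrow (7)\Leftrightarrow (9)$ supplied as above.
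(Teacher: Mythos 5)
Your overall circuit is close to the paper's (the double dimension shift $\Tor_1^R(R/K,I)\cong\Tor_2^R(R/I,R/K)\cong\Tor_1^R(K,R/I)$ for $(5)\Leftrightarrow(6)$, the Tor sandwich for submodules, and the localization behind $(6)\Leftrightarrow(1)$ all match), but two links are genuinely broken. First, your justification of $(3)\Rightarrow(2)$ --- ``immediate since $w$-flat modules are semi-regular $w$-flat'' --- proves the opposite implication $(2)\Rightarrow(3)$. A submodule $N$ of a semi-regular $w$-flat module $M$ is not a submodule of a $w$-flat module in any evident way, so $(3)$ cannot be applied to it; since this is your only arrow \emph{into} $(2)$, condition $(2)$ is left dangling. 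The repair is cheap and is exactly the paper's $(6)\Rightarrow(2)$: under $(8)$ (or under $(6)$, which gives $w$-$\fd_R(R/I)\leq 1$), the exact sequence $\Tor_2^R(R/I,M/N)\rightarrow\Tor_1^R(R/I,N)\rightarrow\Tor_1^R(R/I,M)$ traps $\Tor_1^R(R/I,N)$ between two $\GV$-torsion modules, using only that $M$ is semi-regular $w$-flat, not $w$-flat.

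Second, your direct proof of $(7)\Rightarrow(9)$ does not go through as sketched. The definition of a semi-regular module only requires the filtration factors to be $w$-isomorphic to semi-regular ideals, not to \emph{finite type} ones, so hypothesis $(7)$ need not apply to them; and closure of $w$-projectivity under extensions is not a formal consequence of ``the long exact $\Ext$-sequence attached to its defining vanishing condition,'' because the defining condition involves $\Ext^1_R((M/\tor(M))_w,N)$ and the assignment $M\mapsto (M/\tor(M))_w$ is not exact, so a short exact sequence of modules does not obviously induce one of these associated $w$-modules. The paper avoids both problems by deriving $(9)$ from $(1)$ via the citation \cite[Theorem 4.23]{fk15}, keeping $(9)\Rightarrow(5)$ (or $(9)\Rightarrow(7)$) as the trivial direction; you should do the same or supply a genuine proof of extension-closure. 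A smaller point in the same cluster: the ``$w$-Hattori--Endo'' equivalence you invoke is not quoted as such in \cite{fk15}; the direction ($w$-flat $\Rightarrow$ $w$-invertible) for a finitely generated semi-regular ideal needs the argument that $I_{\m}$ is finitely generated flat, hence free, over the local ring $R_{\m}$ for each maximal $w$-ideal $\m$, which the paper spells out via \cite[Lemma 4.2.1]{g} and \cite[Theorem 2.5]{M89}.
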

\begin{proof} Since the classes of semi-regular $w$-flat modules, $w$-flat modules, $w$-projective modules and $w$-invertible ideals are closed under $w$-isomorphism and every  finite type ideal is isomorphic to a finitely generated  sub-ideal, we just need to consider the ``finitely generated'' cases in $(5), (6)$, $(7)$ and $(8)$.

$(2)\Rightarrow (3)\Rightarrow (4)\Rightarrow (5)$, $(7)\Rightarrow (6)$ and  $(8)\Rightarrow (5)$: Trivial.

$(5)\Leftrightarrow (6)$: Let $I$ be a finitely generated semi-regular ideal of $R$ and $J$ a finitely generated ideal  of $R$. Then we have  $\Tor_1^R(R/J, I)\cong\Tor_2^R(R/I, R/J)\cong\Tor_1^R(R/I, J)$.
Consequently, $J$ is semi-regular $w$-flat if and only if $I$ is $w$-flat.

$(6)\Rightarrow (1)$: Let $I$ be a finitely generated semi-regular ideal of $R$ and $\m$  a maximal $w$-ideal of  $R$. Then $I_{\m}$ is finitely generated flat $R_{\m}$-ideal. By \cite[Lemma 4.2.1]{g} and \cite[Theorem 2.5]{M89}, we have $I_{\m}$ is a free $R_{\m}$-ideal. So the rank of  $I_{\m}$ is at most $1$. Hence $I$ is $w$-invertible by \cite[Theorem 4.13]{fk15}.

$(1)\Rightarrow (6)$: Let $I$ be a finitely generated semi-regular ideal of $R$ and $\m$  a maximal $w$-ideal of  $R$.  Then $I_{\m}$ is a principal $R_{\m}$-ideal  by \cite[Theorem 4.13]{fk15}. Suppose $I_{\m}=\langle \frac{x}{s}\rangle$. Then  $(0:_{R_{\m}}\frac{x}{s})=(0:_{R_{\m}}I_{\m})=(0:_RI)_{\m}=0$ by \cite[Exercise 1.72] {fk16}.  Thus $\frac{x}{s}$ is regular element. So $I_{\m}\cong R_{\m}$. Consequently, $I$ is a $w$-flat $R$-ideal.

$(6)\Rightarrow (2)$: Let $M$ be a semi-regular $w$-flat module and $N$  a submodule of $M$. Suppose $I$ is a finitely generated semi-regular ideal, then $I$ is a $w$-flat ideal. Thus $w$-$\fd_R(R/I)\leq 1$. Consider the exact sequence $$\Tor_2^R(R/I,M/N)\rightarrow \Tor_1^R(R/I,N)\rightarrow \Tor_1^R(R/I,M).$$ Since $\Tor_2^R(R/I,M/N)$ and $\Tor_1^R(R/I,M)$ are $\GV$-torsion, we have $\Tor_1^R(R/I,N)$ is $\GV$-torsion. So $N$ is a semi-regular $w$-flat module.

$(1)\Rightarrow (7)$: Let $I$ be a finitely generated semi-regular ideal of $R$. Then $I$ is $w$-invertible, and hence $w$-projective by \cite[Theorem 4.13]{fk15}.


$(1)\Rightarrow (8)$: See \cite[Theorem 4.23]{fk15}.
\end{proof}

The following examples show that regular $w$-flat ideals are not necessary semi-regular $w$-flat and semi-regular $w$-flat ideals are not necessary semi-regular flat.

\begin{example}\cite[Example 8.10]{L05}
Let $D = \mathbb{Z} + (Y,Z)\mathbb{Q}[[Y,Z]]$ and let $\Pp$ be
the set of height one primes of $D$. Let $R=B+B$ be the $A+B$ ring corresponding
to $D$ and $\Pp$. It was showed that $R$ is a $\PvMR$ but not a $Q_0$-$\PvMR$. Hence there exists a  regular $w$-flat ideal which is not  semi-regular $w$-flat by Theorem \ref{203} and \cite[Theorem 4.8]{z21}.
\end{example}

\begin{example}\cite[Example 8.11]{L05}
Let $E=D[Z]$ where $D$ is a Dedekind domain with a maximal ideal $N=\langle a, b\rangle$ for which no power of $N$ is principal. Let $\Pp$ be
the set of  primes of $E$ which contain neither $Z$ nor $NE$. Set $R = B + B$. It was showed that $R$ is a $Q_0$-$\PvMR$ but not a strong Pr\"{u}fer ring. Hence there exists a  semi-regular $w$-flat ideal which is not  semi-regular flat by Theorem \ref{203} and \cite[Theorem 3.4]{zdxq21}.
\end{example}

\bigskip


\begin{thebibliography}{99}
\bibitem{AA85}  D. D. Anderson, D. F. Anderson, R. Markanda,  {\it The rings $R(X)$and $R\langle X\rangle$}.  J. Algebra, 1985, 95:96-115.


\bibitem{BBE01} L. Bican, R. E. Bahir, E. E. Enochs, {\it All modules have flat covers}. Bull. Lond. Math. Soc., 2001, 33(4):385-390.

\bibitem{BCM79}  J. Brewer, D. Costa and McCrimmon K, {\it Seminormality and root closure in polynomial rings and algebraic curves}. J. Algebra, 58(1979), 217-226.

\bibitem{BS67} H. S. Butts, W.  Smith, {\it Pr\"{u}fer rings}. Math. Z., 1967, 95: 196-211.

\bibitem{EJ00} E. E. Enochs, O. M. G. Jenda  {\it Relative Homological Algebra}. Degruyter Expositions in Mathematics, vol.30, Walter de Gruyter, Berlin, 2000.


\bibitem{FS01} L. Fuchs, L. Salce, {\it Modules over Non-Noetherian Domains}. Providence: AMS, 2001.



\bibitem{g} S. Glaz,  {\it Commutative Coherent Rings}. Lecture Notes in Mathematics, vol. 1371, Berlin: Spring-Verlag, 1989.


\bibitem{HJ08} H. Holm, P. J{\o}rgensen,  {\it Covers, precovers, and purity}. Illinois J. Math., 2008, 52:691-703.

\bibitem{HP80} J. A. Huckaba, I. J. Papick,  {\it Quotient rings of polynomial rings}. Manuscripta Math., 1980, 31:167-196.



\bibitem{L89} T. Lucas, {\it Characterizing when $R[X]$ is integrally closed II}. J. Pure Appl. Algebra, 1989, 61: 49-52.

\bibitem{L93} T. G. Lucas,  {\it Strong Pr\"{u}fer rings and the ring of finite fractions}. J. Pure Appl. Algebra, 1993, 84:59-71.


\bibitem{L05} T. G. Lucas, {\it Krull rings, Pr\"{u}fer v-multiplication rings and the ring of finite fractions}. Rocky Mountain J. Math., 2005, 35:1251-1326.


\bibitem{M80} R. Matsuda,  {\it  Notes on Pr\"ufer\ $v$-multiplication ring}.  Bull. Fac. Sci. Ibaraki Univ., Ser. A., Math., 1980, 12:9-15.


\bibitem{M89} H. Matsumura,  {\it Commutative Ring Theory}. Cambridge studies in advanced mathematics 8, Cambridge University Press, 1989.


\bibitem{QW16}  L. Qiao, F. G. Wang, {\it  $w$-Linked $Q_0$-overrings and $Q_0$-Pr\"{u}fer $v$-Multiplication Rings}. Commun. Algebra, 2016, 44(9):4026-4040.


    \bibitem{fk15} F. G. Wang, H.  Kim,  {\it Two generalizations of projective modules and their applications}. J. Pure Appl. Algebra, 2015, 219(6):2099-2123.

\bibitem{fk16}  F. G. Wang, H. Kim,  {\it Foundations of Commutative Rings and Their Modules}. Singapore: Springer, 2016.

\bibitem{fq15} F. G. Wang, L. Qiao,  {\it The $w$-weak global dimension of commutative rings}.  Bull. Korean Math. Soc., 2015, 52(4):1327-1338.

\bibitem{wzq20} F. G. Wang, L. Qiao, D. C. Zhou,  {\it A homological characterization of strong   Pr\"{u}fer rings}. Acta Math. Sinica (Chin. Ser.),  2021, 64(2):311-316. (in Chinese).


\bibitem{wzcc20} F. G. Wang, D. C. Zhou, D. Chen,  {\it Module-theoretic characterizations of the ring of finite fractions of a commutative ring}. J. Commut. Algebra, to appear. https://projecteuclid.org/euclid.jca/1589335712.

\bibitem{fkxs20} F. G. Wang, D. C. Zhou, H. Kim, T. Xiong, X W. Sun  {\it Every Pr\"{u}fer ring does not have small finitistic dimension at most one}. Commun. Algebra, 2020, 48(12):5311-5320.





\bibitem{z19} X. L. Zhang, {\it Covering and enveloping on $w$-operation}. J. Sichuan Normal Univ. (Nat. Sci.), 2019, 42(3):382-386.

\bibitem{z21} X. L. Zhang,  {\it  A homological characterizations of Pr\"{u}fer  $v$-multiplication rings},  Bull. Korean Math. Soc., to appear.

\bibitem{zdxq21} X. L. Zhang, G. C. Dai,  X. L. Xiao, W. Qi, {\it  Semi-regular flat modules over strong Pr\"{u}fer rings}, https://arxiv.org/abs/2111.02221.


\bibitem{zw21-1} X. L. Zhang, F. G. Wang, {\it On characterizations of $w$-coherent rings II}.  Commun. Algebra, 2021, 49(9):3926-3940.


\bibitem{zw21} X. L.  Zhang, F. G. Wang, {\it The small finitistic dimensions over commutative rings}.  J. Commut. Algebra, to appear.  https://arxiv.org/abs/2103.08807.

\bibitem{ZDC20}  D. C. Zhou, H. Kim, F. G. Wang, D. Chen, {\it  A new semistar operation on a commutative ring and its applications}.  Comm. Algebra, 2020, 48 (9):3973-3988.


\end{thebibliography}
\end{document}